\newtheorem{theorem}{Theorem}[section]
\newtheorem{corollary}[theorem]{Corollary}
\newtheorem{proposition}[theorem]{Proposition}
\theoremstyle{definition}
\newtheorem{definition}[theorem]{Definition}
\theoremstyle{remark}
\newtheorem{remark}[theorem]{\sc Remark}
\newtheorem{example}[theorem]{\sc Example}
\renewcommand{\Box}{\square}    
\newcommand{\Sing}{{\mathrm{Sing\hspace{2pt}}}}
\newcommand{\rank}{{\mathrm{rank\hspace{1pt}}}}
\newcommand{\Disc}{{\mathrm{Disc\hspace{2pt}}}}
\newcommand{\im}{{\mathrm{Im\hspace{2pt}}}}
\newcommand{\Fib}{{\mathrm{Fib}}}
\newcommand{\e}{\varepsilon}
\newcommand{\m}{\setminus}
\newcommand{\s}{\subset}
\newcommand{\fin}{\hspace*{\fill}$\Box$\vspace*{2mm}}
\newcommand{\cS}{{\mathcal S}}
\newcommand{\cW}{{\mathcal W}}
\newcommand{\bR}{{\mathbb R}}
\newcommand{\bC}{{\mathbb C}}
\newcommand{\bK}{{\mathbb K}}
\begin{document}

\title{On singular maps with local fibration}

\author{\sc Ying Chen}
\address{School of Mathematics and Statistics, HuaZhong University of Science and Technology WuHan 430074, P. R. China}
\email{ychenmaths@hust.edu.cn}

\author{Mihai Tib\u{a}r}
\address{Univ. Lille, CNRS, UMR 8524 -- Laboratoire Paul Painlev\'e, F-59000 Lille,
France}
\email{mihai-marius.tibar@univ-lille.fr}

\dedicatory{To the memory of Mihnea Col\c toiu}

\subjclass[2010]{14D06,  58K05, 57R45, 14P10, 32S20, 32S60, 58K15, 32C18}

\keywords{real map germs, fibrations}

\thanks{Ying Chen acknowledges the support from the National Natural Science Foundation of China (NSFC) (Grant no. 11601168). Mihai Tib\u ar acknowledges the support of the Labex CEMPI
(ANR-11-LABX-0007-01). }

\begin{abstract}

We discuss the most general condition under which a singular local tube fibration exists. We give an application to composition of map germs.

\end{abstract}

\maketitle

\section{Stating a meaningful local fibration theorem}

 Let $G :(\mathbb K^m ,0) \to (\mathbb K^p ,0)$ be an analytic map germ, where $m\ge p\ge 1$ and $\bK = \bR$ or $\bC$.
In case $p=1$ and $\bK = \bC$, it is well-known that holomorphic function germs have a local Milnor fibration. Whenever $\bK = \bR$, removing the origin disconnects the real line and, by similar arguments, one obtains a Milnor fibration $G :(\mathbb R^m ,0) \to (\bR ,0)$ which is a trivial fibration over each of the two connected components of the set germ $(\bR \m \{0\}, 0)$.

 The Milnor's local fibration theorem  for holomorphic function germs \cite{Mi} makes sense only if it is independent on the choices of $\e$ and $\delta$ (see Definition \ref {d:tube}). In case of map germs, complex or real, the main problem is to produce the necessary conditions for the existence of the local fibration independently on the chosen neighbourhoods, provided they are  small enough.
 Some part of the recent literature misses the hot core of the local fibration problem by not caring about this independency condition.  
 
 It is of course the setting $p\ge 2$ to which we focus here. 
 Let us look at two examples in order to get a first impression on  the problems that may occur when varying neighbourhoods,
 and on their complexity.

\begin{example}\label{ex:1}\cite{Sa}.
 $F: (\bC^{3}, 0) \to (\bC^{2}, 0)$, $(x,y,z) \mapsto (x^{2}- y^{2}z,  y)$.   The singular locus of $F$ is included in the central fibre  $F^{-1}(0,0)$.

The image of $F$ is open in 0, namely  we have the equality of set germs $(\im F ,0)  = (\bC^{2},0)$.
Nevertheless this map germ $F$ does not have a locally trivial fibration over the set germ $(\bC^{2}\m \{ 0\},0)$, as shown by Sabbah \cite{Sa}. In particular one cannot apply Ehresmann Theorem due to the local non-properness of the map.
\end{example}

\begin{example}\label{ex:2}
$F: (\bK^{2}, 0) \to (\bK^{2}, 0)$,  $F(x,y) = (x, xy)$. The image $F(B_{\e})$  of the ball $B_{\e}$ centred at 0  depends heavily on its radius $\e>0$. Here, not only that the image $F(B_{\e})$ does not  contain a neighbourhood of the origin, but it turns out that the image of $F$ is not even well-defined as a set germ.
 \end{example}

We discuss here the sharpest conditions under which the analytic map germs  define local fibrations.
We find a general condition under which the composition of map germs has a local fibration. Our main result is Theorem \ref{t:composed},  and we discuss one of its consequences.

\section{Tame map germs}\label{s:mapgerms}
\subsection{Map germs having germ image sets}
In Example \ref{ex:2}, the image of the map germs $G$ is not well-defined as a set germ.   Let us define this condition more carefully. We state this in case $\bK = \bR$ but everything holds over $\bC$.

\begin{remark}\label{r:germeq}
Let $A, A'\s \bR^{p}$ be two subsets containing the origin. By definition, one has the equality of set germs $(A,0) = (A',0)$ if and only if there exists some open ball $B_{\e}\s \bR^{p}$ centred at 0 and of radius $\e>0$ such that $A\cap B_{\e} = A'\cap B_{\e}$.
\end{remark}
\begin{definition}\label{d:nice}(\emph{Nice map germs}, \cite{ART}, \cite{JT2}.)
Let $G:(\bR^{m},0) \to (\bR^{p}, 0)$, $m\ge p >0$, be a continuous map germ.
 One says that the image  $G(K)$  of a subset $K \subset \bR^{m}$ containing $0$  is a \emph{well-defined set germ}
 at $0 \in \bR^{p}$ if for all small enough open balls $B_{\e_{1}},  B_{\e_{2}}$ centred at 0, with  $\e_{1}, \e_{2}>0$,  the equality of germs
 $(G(B_{\e_{1}} \cap  K), 0) =  (G(B_{\e_{2}} \cap  K), 0)$ holds.

Whenever both images $\im G$ and  $G(\Sing G)$ are well-defined as set germs,  one says that $G$ is a \emph{nice map germ}, and we abbreviate this by writing  NMG.  
\end{definition}

It turns out that the NMG condition implies that the \emph{discriminant}  $\Disc G := G(\Sing G)$ is a well-defined closed subanalytic set germ, see  \cite{JT2}.

Let us point out the following significant cases  of a nice map germ, complex and real, respectively:
\begin{proposition}[\cite{JT2}]\label{p:dim}
Let $F:(\bC^n, 0)\to(\bC^p,0)$,  $n\geq p>0$, be a holomorphic map germ. If the fibre  $F^{-1}(0)$ has  dimension $n-p$
 then $(\im F, 0) = (\bC^{p}, 0)$.  If moreover $\Sing F \cap F^{-1}(0) = \{ 0\}$ then $F$ is a NMG.
 \end{proposition}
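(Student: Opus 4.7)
The plan is to address the two assertions separately. The first, $(\im F, 0)=(\bC^p, 0)$, would follow from a finiteness plus open-mapping argument on a generic $p$-plane; the second would follow from a compactness/properness argument that exploits the stronger hypothesis on $\Sing F$.

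For the first assertion I would start from the classical lower bound $\dim F^{-1}(0)\ge n-p$ valid for any holomorphic map germ, so the hypothesis forces equality. Equality permits one to choose a linear subspace $L\s \bC^n$ of dimension $p$ with $L\cap F^{-1}(0)=\{0\}$ as set germs at $0$ (a generic $p$-plane transverse to $F^{-1}(0)$ works). The restriction $F|_L:(L,0)\to(\bC^p,0)$ then has an isolated fibre at $0$, hence by the Weierstrass finiteness theorem represents a finite holomorphic map germ. Because source and target both have dimension $p$, a finite holomorphic map between irreducible complex spaces of the same dimension is surjective onto a neighbourhood of the target origin. Thus $\im F\supset \im F|_L$ contains a neighbourhood of $0$ in $\bC^p$, giving $(\im F, 0)=(\bC^p, 0)$.

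For the second assertion, assume additionally $\Sing F \cap F^{-1}(0)=\{0\}$. It remains to prove that $F(\Sing F)$ is a well-defined set germ in the sense of Definition \ref{d:nice}. Fix $\e_0>0$ small enough so that $\Sing F \cap F^{-1}(0)\cap \overline{B_{\e_0}}=\{0\}$. For any $0<\e_1<\e_2 \le \e_0$, the compact set $K:=\Sing F\cap (\overline{B_{\e_2}}\m B_{\e_1})$ is disjoint from $F^{-1}(0)$, so $F(K)$ is compact in $\bC^p\m\{0\}$, and some ball $B_\delta$ avoids it. Then any $y\in F(\Sing F\cap B_{\e_2})\cap B_\delta$ must come from a point $x\in \Sing F \cap B_{\e_1}$, which yields the inclusion $F(\Sing F\cap B_{\e_2})\cap B_\delta \s F(\Sing F \cap B_{\e_1})$, and hence the germ equality by Remark \ref{r:germeq}. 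Combined with the first part, $F$ is a NMG.

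The main obstacle lies in the first part: one must invoke cleanly the statement that a finite holomorphic map germ between $p$-dimensional germs is surjective onto a neighbourhood of the target origin, and justify that a generic linear $L$ meets $F^{-1}(0)$ transversely at $0$ only. These are classical facts but require care in citation. The second part is essentially a routine properness argument once the isolatedness $\Sing F \cap F^{-1}(0)=\{0\}$ is used to keep $F(\Sing F\cap B_{\e_2})$ uniformly away from $0$ outside an inner ball.
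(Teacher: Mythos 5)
Your proof is correct, and it splits into one part that mirrors the paper and one part that takes a genuinely different route. For the first assertion your argument is essentially the paper's: both slice by a generic $p$-plane $L$ (resp. $Z$) meeting $F^{-1}(0)$ only at the origin, observe that the restriction is a finite map germ, and conclude that its image fills a neighbourhood of $0$ in $\bC^p$; whether one phrases the last step as the Open Mapping Theorem (as the paper does) or as surjectivity of finite maps between equidimensional irreducible germs (as you do) is a matter of taste, since the two facts are interchangeable here. For the second assertion the paper invokes Looijenga's theorem that the zero fibre of $F$ is an ICIS under $\Sing F \cap F^{-1}(0)=\{0\}$, so that $F(\Sing F)$ is a hypersurface germ at $0\in\bC^p$ --- in particular a well-defined set germ. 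You instead give a direct compactness argument: the annular piece $\Sing F\cap(\overline{B_{\e_2}}\m B_{\e_1})$ is compact and disjoint from $F^{-1}(0)$, so its image stays uniformly away from $0$, which forces the germ at $0$ of $F(\Sing F\cap B_{\e_2})$ to agree with that of $F(\Sing F\cap B_{\e_1})$. Your route is more elementary and self-contained (it is in effect the proof of the real-case Proposition \ref{l:nice}, which works verbatim over $\bC$), and it yields exactly what the NMG condition requires; the paper's route buys the stronger structural conclusion that the discriminant is a hypersurface, which is not needed for the statement but is useful information downstream. Both arguments are sound.
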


\begin{proof}
We fix be some small enough open ball at the origin $B_{\e}\subset \bC^n$  where the holomorphic map $F$  is well-defined.
If $Z\subset B_{\e}$ be a general complex $p$-plane passing through 0, then $0$ is  an isolated point of the slice $Z\cap F^{-1}(0)$.
For any small enough open neighbourhood  $U_{\e}$ of $0$ in $Z$, the induced map $F_{|U_{\e}}:U_{\e}\to \bC^{p}$ is then finite-to-one, and by the Open Mapping Theorem,  this implies
that $F(U_{\e})$ is open, which shows the equality of germs $( \im F,  0) = ( \bC^{p}, 0)$.

 The condition $\Sing F \cap F^{-1}(0) = \{ 0\}$ mens that the set germ $( F^{-1}(0) , 0)$ is  an ICIS = \textit{isolated complete intersection singularity}, and in this case Looijenga proved  in \cite{Lo} that the image $F(\Sing F)$ is a hypersurface germ at $0\in \bC^p$.
\end{proof}


One has the following real counterpart  of the last assertion of Proposition \ref{p:dim}:

\begin{proposition}[\cite{JT2}]\label{l:nice}
Let $G:(\bR^{m},0) \rightarrow (\bR^{p}, 0)$, $m\ge p >0$ be an analytic map germ.
 If $\Sing G \cap G^{-1}(0) = \{ 0\}$,  then  $G$ is a NMG.
 \fin
\end{proposition}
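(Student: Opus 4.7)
The plan is to verify the two clauses of Definition \ref{d:nice} separately: that $G(\Sing G)$ and that $\im G$ are both well-defined set germs at $0\in\bR^p$. Analyticity of $G$ makes $\Sing G$ a closed analytic subset of a neighbourhood of $0$ in $\bR^m$, so $\Sing G\cap\bar B_\e$ is compact for all small $\e>0$. The hypothesis $\Sing G\cap G^{-1}(0)=\{0\}$ will be used twice: once to run a compactness/properness argument on $\Sing G$, and once to ensure that every nonzero point of $G^{-1}(0)$ close enough to $0$ is a regular point of $G$, at which $G$ is open by the submersion form of the implicit function theorem.

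For $G(\Sing G)$, I would fix $\e_2>\e_1>0$ small enough for $G$ to be defined on $\bar B_{\e_2}$, and any $\eta$ with $0<\eta<\e_1$. The set $K_\eta:=\Sing G\cap(\bar B_{\e_2}\setminus B_\eta)$ is compact and, by hypothesis, disjoint from $G^{-1}(0)$, so $\delta:=\inf\{\|G(x)\|:x\in K_\eta\}>0$. Any point $y\in G(\Sing G\cap B_{\e_2})$ with $\|y\|<\delta$ cannot come from $K_\eta$, hence must be the image of some $x\in\Sing G\cap B_\eta\subset\Sing G\cap B_{\e_1}$. Combined with the trivial inclusion the other way, this gives
\[
(G(\Sing G\cap B_{\e_1}),0)\;=\;(G(\Sing G\cap B_{\e_2}),0),
\]
so $G(\Sing G)$ is a well-defined set germ.

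For $\im G$, I would split according to whether the zero fibre is an isolated point germ. If $(G^{-1}(0),0)=(\{0\},0)$, the same compactness argument goes through with $\bar B_{\e_2}$ in place of $\Sing G\cap\bar B_{\e_2}$, because $G^{-1}(0)\cap(\bar B_{\e_2}\setminus B_\eta)$ is empty for small $\e_2$. If instead $(G^{-1}(0),0)\neq(\{0\},0)$, I would pick, for every small $\e>0$, some $x_0\in G^{-1}(0)\cap B_\e$ with $x_0\neq 0$; by hypothesis $x_0\notin\Sing G$, so $DG(x_0)$ has rank $p$, and the implicit function theorem makes $G$ open at $x_0$. Hence $G(B_\e)$ contains a neighbourhood of $G(x_0)=0$ in $\bR^p$ for every such $\e$, forcing $(\im G,0)=(\bR^p,0)$ so that germ equality becomes trivial.

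The whole argument is essentially one idea (properness on compact sets) applied twice, plus openness of submersions at regular points of $G^{-1}(0)$; no step is genuinely delicate. The only conceptual obstacle is recognising that the non-isolated zero-fibre subcase must be treated separately, because there $G$ itself may fail to be a submersion at $0$ even though its image fills a whole neighbourhood of $0\in\bR^p$.
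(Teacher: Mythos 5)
Your argument is correct. Note first that the paper does not actually contain a proof of Proposition \ref{l:nice}: the statement is quoted from \cite{JT2} and closed with an end-of-proof box, so there is no in-paper argument to compare yours against. The closest thing the paper does prove is the complex counterpart, Proposition \ref{p:dim}, and there the route is genuinely different: openness of the image is obtained by slicing with a generic $p$-plane, observing that the restriction is finite-to-one, and invoking the Open Mapping Theorem, while the well-definedness of $G(\Sing G)$ is delegated to Looijenga's theorem that the discriminant of an ICIS is a hypersurface germ. Neither tool is available over $\bR$, and your substitute --- the compactness/properness argument showing that no point of $\Sing G\cap(\bar B_{\e_2}\setminus B_\eta)$ (resp.\ of $\bar B_{\e_2}\setminus B_\eta$ when $(G^{-1}(0),0)=(\{0\},0)$) can map into a sufficiently small ball around $0$, combined with openness of $G$ at the smooth points of $G^{-1}(0)$ in the non-isolated case --- is exactly the right elementary replacement, and it gives a self-contained proof. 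Your case split for $\im G$ is also necessary, not an artifact: for $G(x)=x^2$ on $\bR$ the hypotheses hold, the image germ is well defined, but it is a half-line rather than a neighbourhood of $0$, so one cannot hope to prove $(\im G,0)=(\bR^p,0)$ uniformly; this matches the paper's Remark \ref{r:condtame}(ii), whose extra hypothesis $\Sing G\cap G^{-1}(0)\neq G^{-1}(0)$ is precisely your second case. Two minor points you should make explicit: the hypothesis $\Sing G\cap G^{-1}(0)=\{0\}$ is an equality of germs, so fix $\e_0$ with $\Sing G\cap G^{-1}(0)\cap \bar B_{\e_0}=\{0\}$ before choosing $\e_2<\e_0$; and in the degenerate situation $K_\eta=\emptyset$ the desired inclusion is vacuous, so $\delta$ may be taken arbitrary.
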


The question whether a given map germ has a well defined image as a set germ seems to be wide open.
  A first  classification has been given in \cite{JT1} for the case case of holomorphic map germs with target $(\bC^{2}, 0)$.



 \subsection{Milnor set and tame map germs}\label{s:nmg}

 We will give here a convenient condition which implies NMG in full generality. 

Let  $G:(\bR^{m},0) \rightarrow (\bR^{p}, 0)$ be a non-constant  analytic map germ, $m \ge p \ge1$.
Let $U \subset \bR^m$ be a manifold, let
	  $\rho := \| \cdot \|$ be the Euclidean distance function, and let $\rho_{|U}$ be its restriction to $U$.
	The set of \textit{$\rho$-nonregular points} of $G_{|U}$, also called \emph{the Milnor set of $G_{|U}$},
	is defined as:	
 $$M(G_{|U}):=\left\lbrace x \in U \mid \rho_{|U} \not\pitchfork_x G_{|U} \right\rbrace .$$

	    It turns out from the definition that $M(G_{|U})$ is real analytic whenever $U$ is supposed analytic.
	By definition $M(G_{|U})$ coincides with the singular set  $\Sing (\rho, G)_{|U}$, which is itself defined as
the set of points $x\in U$ such that  either  $x\in \Sing(G_{|U})$,   or $x\not\in \Sing(G_{|U})$ and $\rank_{x}( \rho_{|U}, G_{|U}) = \rank_{x}(G_{|U})$.   We will actually consider the germ at 0 of $M(G_{|U})$, and we will denote it by $M(G)$.

\begin{definition}[\emph{The Milnor set in the stratified setting}]\label{d:Mstr} \
Let  $G:(\bR^{m},0) \rightarrow (\bR^{p}, 0)$, with $m\geq p >1$, be a non-constant  analytic map germ. We say that  a finite semi-analytic Whitney (a)-regular stratification $\cW$ of $\bR^{m}$ is a stratification of $G$ if $\Sing G$ is a union of strata, and  such that the restriction $G_{|W}$ has constant rank  for any $W\in \cW$.

Let $W \in \cW$  be the germ at 0 of the stratum $W$, and let
$M(G_{|W})$ be the Milnor set of $G_{|W}$, as defined above.
One then calls
\[ M(G):=\bigsqcup_{W\in \cW} M(G_{|W_\alpha}) \]
	the set of \textit{stratwise $\rho$-nonregular points} of $G$ with respect to the stratification $\cW$.
\end{definition}

\begin{remark}\label{r:milnorset}
The Milnor set  $M(G)$  is  closed because $\cW$ is a Whitney (a)-stratification. Also notice that if $\rank G_{|W}= \dim W$, then $W\subset M(G)$. 

Because of the next definition, we are especially interested in the intersection $M(G)\cap G^{-1}(0)$. The following inclusion holds:
\begin{equation}\label{eq:milnorradius}
 M(G)\cap G^{-1}(0) \subset \Sing G \cap G^{-1}(0).
\end{equation}
Indeed, by Milnor's classical result on the local conical structure of semi-analytic sets \cite{Mi}, \cite{BV}, there exists $\e_{0}>0$ such that the manifold $G^{-1}(0) \m \Sing G$ is transversal to the  sphere $S^{m-1}_{\e}$ centred at 0,  for any $0<\e <\e_{0}$. For any fixed point $a\in G^{-1}(0) \m \Sing G$,  a whole open ball $B$ centred at $a$ does not intersect  $\Sing G$, and it then follows that the nearby fibres of $G$ inside $B$
  are also transversal to the levels of the distance function $\rho$, provided that $B$ is small enough.

This implies that $M(G)\cap (G^{-1}(0) \m \Sing G) =\emptyset$,
which proves our claim.
\end{remark}

\begin{definition}[\emph{Tame map germs, \cite{JT2}}]\label{d:tame}
Let $G:(\bR^{m},0) \rightarrow (\bR^{p}, 0)$, with $m > p \ge 2$, be a non-constant  analytic map germ. We say that $G$ is \emph{tame} with respect to the stratification $ \cW$ if the following  inclusion of set germs holds:
\begin{equation}\label{eq:main2}
\overline{M(G) \m  G^{-1}(0)} \cap G^{-1}(0) \subset     \{ 0\}.
\end{equation}
 \end{definition}

It follows from the definition that if $G$ is tame then the closure of the strata of $ \cW$ of
 dimensions $\le p$ intersect  $G^{-1}(0)$ only at $\{ 0\}$.

\begin{remark}
The identity map is not tame.
 As one may easily check, Examples \ref{ex:1} and \ref{ex:2} are not tame.  
\end{remark}

As we work in a highly singular situation,  we have to insure the existence of the images as set germs because this is a preliminary condition in the  problem of the existence of a singular local fibration. The ``tame'' condition just does this job:

\begin{theorem}\label{main-new}\cite{JT2}
Let  $G:(\bR^{m},0) \rightarrow (\bR^{p}, 0)$, with $m \geq p \geq 2$, 
be a non-constant  analytic map germ.
If $G$ is tame  then:
\begin{enumerate}
 \rm \item \it $G$ is a NMG at the origin.
 \rm \item \it the image  $G(W_\alpha)$ is a well-defined set germ at the origin, for any stratum $W_\alpha \in  \cW$.
\end{enumerate}
\fin
\end{theorem}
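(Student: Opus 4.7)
The plan is to reduce part (a) to part (b) and then prove (b) by combining a Milnor-tube consequence of the tame condition with a stratified lifting argument. For the reduction: the stratification $\cW$ being locally finite near $0$, the unions $\im G = \bigcup_{\alpha} G(W_\alpha)$ and $\Disc G = G(\Sing G) = \bigcup_{W_\alpha \subset \Sing G} G(W_\alpha)$ are finite unions of well-defined set germs at the origin, hence themselves well-defined. So I focus on (b): fix a stratum $W_\alpha$ and radii $0 < \e_1 < \e_2$ both small enough that the tame inclusion \eqref{eq:main2} holds in $B_{\e_2}$, and aim to produce $\delta > 0$ with $G(W_\alpha \cap B_{\e_2}) \cap B_\delta^p \subset G(W_\alpha \cap B_{\e_1})$; the reverse inclusion is trivial.

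The heart of the argument is a tube-type consequence of tameness: there exists $\delta > 0$ such that
\[ M(G) \cap \bigl(\overline{B_{\e_2}} \setminus B_{\e_1/2}\bigr) \cap G^{-1}\bigl(B_\delta^p \setminus \{0\}\bigr) = \emptyset. \]
Were this to fail, one could extract a sequence $x_n$ in the above intersection with $G(x_n) \to 0$; a subsequential limit $x^*$ would satisfy $\|x^*\| \geq \e_1/2$, $G(x^*) = 0$, and $x^* \in \overline{M(G) \m G^{-1}(0)}$, contradicting \eqref{eq:main2}. Given this $\delta$ and any $y \in B_\delta^p \setminus \{0\}$ with $y = G(x_0)$ for some $x_0 \in W_\alpha \cap B_{\e_2}$: if $x_0 \in B_{\e_1}$ we are done; else $x_0$ lies in the shell where the tube lemma forces $x_0 \notin M(G|_{W_\alpha})$, so $G|_{W_\alpha}$ and $\rho|_{W_\alpha}$ are transverse at $x_0$. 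Locally choose vector fields tangent to the fibres of $G|_{W_\alpha}$ with strictly negative $d\rho$, glue them by partition of unity on the shell open set $\Omega := W_\alpha \cap (B_{\e_2} \setminus \overline{B_{\e_1/2}}) \cap G^{-1}(B_\delta^p \setminus \{0\})$ (which avoids $M(G|_{W_\alpha})$ by the tube lemma), and integrate. The integral curve starting at $x_0$ stays in $W_\alpha \cap G^{-1}(y)$, strictly decreases $\rho$, cannot escape through $S_{\e_2}$, and therefore enters $B_{\e_1}$, producing the desired preimage in $W_\alpha \cap B_{\e_1}$.

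The main obstacle is that the integral curve may accumulate on $\overline{W_\alpha} \setminus W_\alpha$ — a union of lower-dimensional strata — in finite time before reaching $B_{\e_1}$, thereby stalling the flow inside $W_\alpha$. To handle this I would invoke the Whitney-(a) regularity of $\cW$ to promote the vector field to a stratum-compatible stratified field on $\overline{W_\alpha}$, so that upon hitting a face the flow continues on the adjacent lower stratum while $\rho$ still strictly decreases. An equivalent tactic is to induct on $\dim W_\alpha$: the zero-dimensional stratum $\{0\}$ trivially has image $\{0\}$, and for higher strata the inductive hypothesis provides a well-defined image for any lower stratum onto which the flow might fall, allowing the set-germ equality to propagate and closing the argument.
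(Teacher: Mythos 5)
First, note that the paper gives no proof of Theorem \ref{main-new}: it is quoted from \cite{JT2} and closed with a box, so the comparison is necessarily with the argument of that reference and with the proof sketch of the closely related Theorem \ref{t:tube}. Your overall strategy is the right one and is in the same spirit: reduce (a) to (b) by finiteness of $\cW$ (correct), derive from tameness and the closedness of $M(G)$ that $M(G)\cap(\overline{B_{\e_2}}\m B_{\e_1/2})\cap G^{-1}(B^p_\delta\m\{0\})=\emptyset$ for some $\delta>0$ (your compactness argument here is correct), and then descend along the fibres of $G_{|W_\alpha}$ by integrating a vector field on which $\d\rho<0$. Up to that point the proposal is sound.

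The genuine gap is exactly the ``main obstacle'' you flag yourself, and neither of your two fixes closes it. Fix (i): if you extend the field to a stratified field on $\overline{W_\alpha}$ and let the trajectory ``continue on the adjacent lower stratum'' $W_\beta$, the point you eventually produce in $B_{\e_1}\cap G^{-1}(y)$ lies in $W_\beta$, not in $W_\alpha$; this certifies $y\in G(W_\beta\cap B_{\e_1})$, which is not what statement (b) requires for the stratum $W_\alpha$. Fix (ii): the induction on $\dim W_\alpha$ is not a proof --- knowing that $G(W_\beta)$ is a well-defined set germ for frontier strata provides no mechanism for returning the trajectory to $W_\alpha$ or for concluding $y\in G(W_\alpha\cap B_{\e_1})$; ``allowing the set-germ equality to propagate'' is precisely the missing step. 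The correct resolution is the opposite of (i): one must choose the vector field so that its flow \emph{never leaves} $W_\alpha$. This is what the Thom--Mather theory of \emph{controlled} vector fields provides: the flow of a controlled lift preserves each stratum, and the properness of $\rho$ on the compact stratified set $\overline{W_\alpha}\cap(\overline{B_{\e_2}}\m B_{\e_1})\cap G^{-1}(y)$, on which $\rho$ is a stratified submersion by your shell lemma applied to \emph{all} strata, forces the trajectory to reach the level $\rho=\e_1$ while staying in $W_\alpha$. Note that constructing controlled fields requires Whitney (b)-regularity (compare Definition \ref{d:regularstr}), not merely the (a)-regularity you invoke: condition (a) is what makes $M(G)$ closed (Remark \ref{r:milnorset}), but it does not by itself yield stratum-preserving flows. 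A further small omission: your argument only treats $y\neq 0$, whereas the set-germ equality must also account for the value $0$ (strata contained in $G^{-1}(0)$, or whose fibre over $0$ does not accumulate at the origin); this is easily handled by the local conic structure, but it should be said.
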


\begin{remark}\label{r:condtame}
Let us point out two very particular cases where $\im G$ is well-defined as a set germ:

\noindent
(i). $\Sing G \subset  G^{-1}(0)$ and  $G$ satisfies condition \eqref{eq:main2}.
It was shown  by Massey \cite{Ma} that  under these hypotheses one has $(\im G,0) = (\bR^p,0)$.

\noindent
(ii).  $\Sing G \cap G^{-1}(0) \not= G^{-1}(0)$.
This is the hypothesis of  Proposition \ref{l:nice}, and   condition  \eqref{eq:main2} is not required. In this case we also have  $(\im G,0) = (\bR^p,0)$.
\end{remark}

\subsection{Singular stratified fibration theorem}\label{s:sing}


After  \cite{JT2}, we show that \emph{tame} is the most suitable condition under which  one can prove the existence of a local singular fibration.
We focus on the general case $\dim \Disc G >0$. We first need two definitions.

\begin{definition}[Regular stratification of the map germ $G$]\label{d:regularstr}
Let  $G:(\bR^{m},0) \rightarrow (\bR^{p}, 0)$ be a non-constant  analytic map germ, $m> p \ge 2$, and let
$ \cW$ be a Whitney (b)-regular stratification  of $G$ at 0, as defined above.
We assume that $G$ is  tame.
Then  Theorem \ref{main-new}   tells that the
images of all strata of $ \cW$ are well-defined as set germs at 0.
By the classical stratification theory,
there exists a germ of a finite subanalytic stratification $\cS$   of  the target such that  
$\Disc G$ (which is a closed subanalytic set, as remarked after Definition \ref{d:nice})
 is a union of strata,  and such that $G$ is  a stratified submersion relative to the couple 
 of stratifications $(\cW,  \cS)$,
 meaning that the image by $G$ of a stratum  $W_\alpha \in  \cW$ is  a single stratum $S_{\beta} \in \cS$,
and that the restriction $G_{|}:W_\alpha \to S_{\beta}$ is a submersion.
	
We call the couple  $(\cW,  \cS)$ a \emph{regular stratification of the map germ $G$.}
\end{definition}		

\begin{definition}[Singular Milnor tube fibration, \cite{JT2}]\label{d:tube}
Let $G:(\bR^{m},0) \rightarrow (\bR^{p}, 0)$, $m\ge p>1$,  be a non-constant analytic map germ. Assume that there exists some regular stratification $( \cW,  \cS)$ of $G$.

We say that $G$ has  a \emph{(singular) Milnor tube fibration} relative to $(\cW,  \cS)$  if for any small enough $\e > 0$ there exists  $0<\eta \ll \e$ such that the restriction:
\begin{equation}\label{eq:tube1}
G_| :  B^{m}_{\e} \cap G^{-1}( B^{p}_\eta \m \{ 0\} ) \to  B^{p}_\eta \m \{ 0\}
\end{equation}
is a stratified locally trivial fibration which is independent, up to stratified 
homeomorphisms, of the choices of $\e$ and $\eta$.

By \emph{stratified locally trivial fibration} we mean that for any stratum $S_{\beta}$ of $ \cS$, the restriction $G_{| G^{-1}(S_{\beta})}$ is a locally trivial \emph{stratwise fibration}.
\end{definition}

What means more precisely ``independent, up to stratified homeomorphisms, of the choices of $\e$ and $\eta$''?
 It means that when replacing the Milnor data $(\e, \eta)$ by another Milnor data $(\e', \eta')$, for  some $\e'<\e$, and suitably smaller $\eta'<\eta$, then the two  fibrations  \eqref{eq:tube1} 
have the same stratified image in the smaller ball $B^{p}_{\eta'}$, and these fibrations are stratified diffeomorphic 
over $B^{p}_{\eta'} \m \{ 0\}$.

\medskip

The non-empty fibres of \eqref{eq:tube1} are those over some connected stratum $S_{\beta} \subset \im G$ of $ \cS$, and
 such fibre  $G^{-1} (s)$, for some $s\in S_{\beta}$, is the union $\sqcup_{W_{\alpha}}G^{-1} (s)$  over all strata $W_{\alpha}\subset G^{-1} (S_{\beta})$.

One has the following fundamental result:
\begin{theorem} \label{t:tube}\cite{JT2}
Let $G:(\bR^m, 0) \to (\bR^p,0)$, $m > p\ge 2$,  be a non-constant analytic map germ. 	
If $G$ is tame, then $G$ has a singular Milnor tube fibration \eqref{eq:tube1}.
\fin
\end{theorem}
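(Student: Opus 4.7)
The plan is to combine Milnor's theorem on the local conical structure of semi-analytic sets with Thom's first isotopy lemma for stratified maps; the bridge between these two classical tools will be precisely the tameness hypothesis.

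First I would fix a Whitney (b)-regular stratification $\cW$ of $G$ refining the partition $\{G^{-1}(0), \bR^m \m G^{-1}(0)\}$, and invoke the classical conical structure theorem to obtain $\e_0 > 0$ such that every sphere $S^{m-1}_\e$ with $0 < \e \le \e_0$ is transverse to every stratum of $\cW$. Given such an $\e$, tameness, combined with the inclusion $M(G) \cap G^{-1}(0) \s \Sing G$ from Remark \ref{r:milnorset} and a compactness argument on $\overline{B^m_\e}$, should yield a radius $\eta = \eta(\e) \ll \e$ such that
$$M(G) \cap \overline{B^m_\e} \cap G^{-1}(B^p_\eta \m \{ 0\}) = \emptyset.$$
On the tube $T := B^m_\e \cap G^{-1}(B^p_\eta \m \{ 0\})$ this means that every restriction $G_{|W_\alpha}$ is a submersion onto the image stratum $S_\beta \in \cS$ of the regular stratification from Definition \ref{d:regularstr}, and in addition that $S^{m-1}_\e$ is transverse, stratum by stratum, to every fibre of $G$ over $B^p_\eta \m \{0\}$.

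The existence of the fibration would then follow from a standard isotopy argument: the extended map $(G,\rho)_|:\overline{T} \to (B^p_\eta \m \{ 0\}) \times [0,\e]$ is a proper stratified submersion, and Thom's first isotopy lemma allows one to lift any vector field on a target stratum $S_\beta$ to a controlled, stratwise tangent vector field on $G^{-1}(S_\beta) \cap \overline{T}$ that is moreover tangent to $S^{m-1}_\e$ at the boundary. Integrating these lifts over contractible open sets in each $S_\beta$ would produce the stratum-preserving local trivializations of \eqref{eq:tube1}.

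For the independence clause I would compare two admissible Milnor data $(\e,\eta)$ and $(\e',\eta')$ with $\e' < \e$ and $\eta' \ll \eta$. The same emptiness of the Milnor set holds on the annular region $A := (\overline{B^m_\e} \m B^m_{\e'}) \cap G^{-1}(B^p_{\eta'} \m \{ 0\})$, so I would build stratum by stratum a stratified vector field on $A$ whose image under $G$ vanishes and which points inward along $\rho$, then integrate it to produce the required stratified homeomorphism between the two tubes over $B^p_{\eta'} \m \{ 0\}$. The main technical obstacle I anticipate is the stratified bookkeeping: at every step the vector-field lifts must be chosen compatibly across strata of both $\cW$ and $\cS$, and for this Whitney (b)-regularity together with the existence of Mather's controlled tube systems will be used in an essential way.
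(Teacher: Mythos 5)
Your overall strategy---use tameness plus a compactness argument to control the Milnor set, then invoke Thom--Mather---is the same as the paper's, but your key intermediate claim is false. You assert that for $\eta\ll\e$ one has $M(G)\cap\overline{B^m_\e}\cap G^{-1}(B^p_\eta\m\{0\})=\emptyset$. Tameness only says that $\overline{M(G)\m G^{-1}(0)}$ meets $G^{-1}(0)$ at most at the origin; it does not prevent $M(G)\m G^{-1}(0)$ from accumulating at $0$ itself, and any such point sufficiently close to $0$ lies in the solid tube for \emph{every} $\eta>0$. Concretely, $G(x,y,z)=(x,\,y^2+z^2)$ is tame (its central fibre is $\{0\}$, so condition \eqref{eq:main2} is automatic), yet $\d(\rho^2)=2x\,\d G_1+\d G_2$ at every point, so $M(G)=\bR^3$ and your intersection is the whole (nonempty) tube. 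Consequently your next assertion, that $(G,\rho)$ is a proper stratified submersion on $\overline{T}$, also fails in general: that statement is \emph{equivalent} to the emptiness you claimed, so the fibration cannot be obtained by lifting vector fields through the pair $(G,\rho)$ on the solid tube.

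What the compactness argument actually extracts from tameness is emptiness on the \emph{spherical boundary} of the tube: $M(G)\cap S^{m-1}_\e\cap G^{-1}(B^p_\eta\m\{0\})=\emptyset$ for $\eta\ll\e$, since a limit point of a putative bad sequence would lie in $\overline{M(G)\m G^{-1}(0)}\cap G^{-1}(0)\cap S^{m-1}_\e$, which tameness forbids because $\e\neq0$. This is exactly what the paper uses: the restriction $G_{|}:W_\alpha\cap\overline{B^m_\e}\cap G^{-1}(S_\beta\cap B^p_\eta\m\{0\})\to S_\beta\cap B^p_\eta\m\{0\}$ is a proper submersion from a manifold with boundary --- submersivity in the interior comes for free from the regular stratification of Definition \ref{d:regularstr} and needs no Milnor-set input, while submersivity on the boundary piece $W_\alpha\cap S^{m-1}_\e$ is the sphere-transversality above --- and Thom--Mather is applied to $G$ alone, with the lifted vector fields required to be tangent to the boundary. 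Your independence argument survives essentially intact, because the annulus $A$ avoids a neighbourhood of the origin, so there the same compactness argument really does give $M(G)\cap A=\emptyset$ once $\eta'$ is small relative to the inner radius $\e'$. The repair is therefore localized: replace "$M(G)$ is empty on the solid tube" by "$M(G)$ is empty on the spherical part of its boundary", and run the isotopy for $G$ rather than for the pair $(G,\rho)$.
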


The  proof consists in showing that  the restriction map
\begin{equation}\label{eq:restrG}
G_{|}:  W_{\alpha}\cap \overline{B^m_\e} \cap G^{-1}(S_{\beta} \cap B^{p}_\eta \m \{0\}) \to  S_{\beta} \cap  B^{p}_{\eta} \m \{0\}
\end{equation}
is well-defined, it is  a submersion from a manifold with boundary, hence it is therefore a locally trivial stratified fibration by Thom-Mather Isotopy Theorem. The tame condition \eqref{eq:main2} also implies that this fibration is independent of $\e$ and $\eta$ up to stratified homeomorphisms.

\begin{remark}
 The relation between \emph{tame} and the Thom regularity illustrates once more why \emph{tame} is a universal condition, more precisely we have the implication:  if $G$ has a Whitney stratification $\cW$ (cf Definition \ref{d:Mstr}) such that it is Thom regular at all the strata  included in $G^{-1}(0)$ then $G$ is tame.
  We refer to \cite{JT2} for details, and for several examples.
\end{remark}

\section{Applications of the tube fibration}\label{s:appl}

 We consider here the particular setting of analytic map germs $G:  (\bR^m, 0) \to (\bR^p, 0)$ with \emph{isolated singular value}, which means $\Sing G \subset G^{-1}(0)$. This is more general than what was usually considered in the literature after Milnor's \cite{Mi} and before 2008, namely map germs with an isolated singularity. With \cite{AT0}, \cite{AT1}, \cite{Ma} and \cite{ACT} this more general setting started to be studied in more detail. 
 
\begin{remark}\label{r:pointout}
  A map germ $G:  (\bR^m, 0) \to (\bR^p, 0)$, $m\ge p\geq 2$, with \emph{isolated singular value} has the following properties:
\end{remark}

 \begin{itemize}
\item[(a).]   The stratification of the target has only two strata: $\bR^p \m \{0\}$, and the origin.
 A stratification of the source means some Whitney (a)-regular stratification of the singular locus $\Sing G \subset G^{-1}(0)$ without any other conditions, and $\bR^m \m \Sing G$ is one stratum, if connected, or  it is the union of the connected components of $\bR^m \m \Sing G$ as strata.

\item[(b).]  The \emph{tame} condition for $G$ amounts to the \emph{$\rho$-regularity} in our setting of isolated singular value, which reads (compare to Definitions \ref{d:Mstr} and \ref{d:tame}):
\begin{equation}\label{eq:rhoreg}
  \overline{M(G_{|\bR^m \m G^{-1}(0)})} \cap G^{-1}(0) \subset     \{ 0\}.
\end{equation}
Whenever \eqref{eq:rhoreg} holds, we will say for short: \emph{$G$ is $\rho$-regular}.

\item[(c).]  If the map germ $G$ is tame and with isolated singular value, then the image of $G$ is locally open,
in other words we have the equality of set germs $(\im G, 0)=(\bR^p, 0)$. This was remarked by Massey \cite[Cor. 4.7]{Ma},  see also \cite{AT1}, \cite{JT2}.  
Moreover,  since $G$ has tube fibration by Theorem \ref{t:tube}, and since $\bR^p \setminus \{0\}$ is connected for $p\ge 2$, the Milnor fibre $\Fib (G)$ of $G$ is independent of the point, in the sense that one has a diffeomorphism $G^{-1}(a) \simeq G^{-1}(b)$ for any $a,b\not= 0$ in a small disk at 0 in $\bR^p$.
 \end{itemize}
 
   \
Theorem \ref{t:tube} enables us to treat some significant classes of singular map germs, as follows.  

\begin{theorem}\label{t:composed}
Let $F:(\mathbb{R}^m,0)\rightarrow(\mathbb{R}^p,0)$ and $G:(\mathbb{R}^p,0)\rightarrow(\mathbb{R}^k,0)$, $m\geq p\geq k\geq 2$, be analytic map germs, such that $F$ is tame and has isolated singular value, and that $G$ has an isolated singular point at the origin. 

Then  $H = G\circ F$ is tame, locally open, and has a local tube fibration.
\end{theorem}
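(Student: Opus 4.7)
The plan is to reduce the statement to Theorem \ref{t:tube} and Remark \ref{r:pointout}(c): once we establish that $H$ is tame and has isolated singular value, both the tube fibration and the local openness $(\im H,0)=(\bR^k,0)$ follow at once. By Remark \ref{r:pointout}(b), in the isolated singular value setting tameness coincides with the $\rho$-regularity condition \eqref{eq:rhoreg}, so the heart of the proof is to verify \eqref{eq:rhoreg} for $H$.

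First I would determine $\Sing H$ from the chain rule $dH_x=dG_{F(x)}\circ dF_x$. A short case analysis using $\Sing F\subset F^{-1}(0)$, together with $\Sing G=\{0\}$ (which entails non-surjectivity of $dG_0$), yields $\Sing H=F^{-1}(0)$; in particular $\Sing H\subset H^{-1}(0)$, so $H$ has isolated singular value and sits in the setting of Remark \ref{r:pointout}.

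Next I would establish the key inclusion of Milnor sets
\[
M\bigl(H_{|\bR^m\setminus H^{-1}(0)}\bigr)\ \subset\ M\bigl(F_{|\bR^m\setminus F^{-1}(0)}\bigr).
\]
Away from $H^{-1}(0)$ both $H$ and $F$ are submersions; by the chain rule each row of $dH_x$ is a linear combination of the rows of $dF_x$, so the defining condition $\nabla\rho_x\in\mathrm{rowspan}\,dH_x$ of the Milnor set of $H$ forces the analogous condition for $F$, whence the inclusion.

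To conclude \eqref{eq:rhoreg}, take a sequence $x_n\in M(H_{|\bR^m\setminus H^{-1}(0)})$ with $x_n\to x_0\in H^{-1}(0)$. If $x_0\in F^{-1}(0)$, the above inclusion together with the $\rho$-regularity of $F$ (Remark \ref{r:pointout}(b), since $F$ is tame with isolated singular value) immediately forces $x_0=0$. The main obstacle is the remaining case $x_0\in H^{-1}(0)\setminus F^{-1}(0)$, which may occur whenever $G^{-1}(0)\supsetneq\{0\}$ and is therefore invisible to the Milnor set of $F$. Here $F(x_0)\ne 0$, so $x_0$ is a regular point of $H$ and $H^{-1}(0)$ is a smooth submanifold in a neighbourhood $V$ of $x_0$. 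I would then apply Milnor's local conical structure theorem \cite{Mi},\cite{BV} to the analytic set $H^{-1}(0)$: assuming $\|x_0\|$ is small enough, the sphere $S^{m-1}_{\|x_0\|}$ is transverse to $H^{-1}(0)$ at $x_0$. Since $H$ is a submersion in $V$ and transversality is an open condition, a smaller neighbourhood $V'\subset V$ satisfies that the fibre $H^{-1}(H(x))$ is transverse to $S^{m-1}_{\|x\|}$ at every $x\in V'$, so $V'\cap M(H_{|\bR^m\setminus H^{-1}(0)})=\emptyset$, contradicting $x_n\to x_0$. This proves \eqref{eq:rhoreg}, hence the tameness of $H$; Theorem \ref{t:tube} then supplies the tube fibration and Remark \ref{r:pointout}(c) the local openness.
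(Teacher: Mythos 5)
Your proposal is correct and follows essentially the same route as the paper: both establish $\Sing H\subset F^{-1}(0)$ via the chain rule, deduce the key inclusion $M(H)\setminus H^{-1}(0)\subset M(F)\setminus F^{-1}(0)$ from the rowspan argument, and then combine the tameness of $F$ with the conical-structure/transversality argument to rule out limit points of the Milnor set on $H^{-1}(0)\setminus F^{-1}(0)$. Your case (ii) is just an explicit re-derivation of the inclusion \eqref{eq:milnorradius} of Remark \ref{r:milnorset}, which the paper cites directly in its chain of inclusions \eqref{eq:incl0}.
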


\begin{proof}
 Let us first remark that $G$ has a tube fibration since it has an isolated singular point, as Milnor has shown in \cite[page 94]{Mi}, see also \cite{ACT}. 
 By comparing the corresponding Jacobian matrices, we deduce that the hypotheses $\Sing F \subset F^{-1}(0)$ and $\Sing G\subset \{0\}$ imply $\Sing H \subset F^{-1}(0)\subset H^{-1}(0)$. This shows in particular that $H$ has isolated singular value. It moreover shows:
\begin{equation}\label{eq:incl0}
 \overline{M(H)\setminus H^{-1}(0)}\cap H^{-1}(0) \subset M(H)\cap H^{-1}(0)  \subset \Sing H \cap H^{-1}(0)   \subset \Sing H \cap F^{-1}(0),
\end{equation}
where the first inclusion is obvious, the second is shown in \eqref{eq:milnorradius} of Remark \ref{r:milnorset}, and the last inclusion was pointed out just above.

 We have the inclusions $M(H)\subset M(F)$ and $F^{-1}(0)\subset H^{-1}(0)$, and therefore we obtain the inclusion
 $M(H)\setminus H^{-1}(0) \subset M(F)\setminus F^{-1}(0)$.
 
 By taking closures and intersecting with $H^{-1}(0)$, we then get the inclusions:
 \[ \overline{M(H)\setminus H^{-1}(0)}\cap H^{-1}(0) \subset  \overline{M(H)\setminus H^{-1}(0)}\cap F^{-1}(0)\subset  \overline{M(F)\setminus F^{-1}(0)}\cap F^{-1}(0)
 \]
 where the first one uses \eqref{eq:incl0}.  This shows
  that the $\rho$-regularity of $F$ implies the $\rho$-regularity of $H$.  By the Tube Fibration Theorem \ref{t:tube},  the map  $H$ has a tube fibration. It is also locally open due to Remark \ref{r:condtame}(i).
\end{proof}


One has the following particular case of the above results:

\begin{corollary}\label{c:Milnorquest}
Let $F=(g_{1},\ldots , g_{p}): (\bR^m, 0) \to (\bR^p, 0)$, $m\ge p\geq 3$,  be an analytic map with an isolated singular value, and $\rho$-regular.

Let $H := (g_{1},\ldots,g_{p-1}): (\bR^m, 0) \to (\bR^{p-1}, 0)$.  Then $H$ has a tube fibration and its Milnor fibre of $H$ is homeomorphic to the Milnor fibre of $F$ times an open interval.
\fin
 \end{corollary}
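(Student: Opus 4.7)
The plan is to apply Theorem~\ref{t:composed} with $G := \pi \colon (\bR^p, 0) \to (\bR^{p-1}, 0)$ the canonical projection $(y_1, \dots, y_p) \mapsto (y_1, \dots, y_{p-1})$, so that $H = \pi \circ F$. Since $\pi$ is a submersion at every point, $\Sing \pi = \emptyset \subset \{0\}$, so $\pi$ trivially has an isolated singular point at the origin. Together with the tameness and isolated-singular-value hypothesis on $F$, Theorem~\ref{t:composed} immediately yields that $H$ is tame, locally open, and admits a local tube fibration, which settles the first assertion.

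For the Milnor fibre identification, I would exploit the fact that, because $F$ is a submersion outside $F^{-1}(0)$ (by isolated singular value) and $H$ consists of the first $p-1$ components of $F$, the height coordinate $g_p$ restricted to a fibre $H^{-1}(s') \cap B^m_\e$ (for small $s' \neq 0$) is itself a submersion, whose level sets $F^{-1}(s', t) \cap B^m_\e$ are Milnor-type fibres of $F$. Fix Milnor data $(\e, \eta)$ for $F$ and set $I_{s'} := (-\sqrt{\eta^2 - \|s'\|^2},\, \sqrt{\eta^2 - \|s'\|^2})$, the open arc in which $\{s'\} \times \bR$ meets $B^p_\eta$. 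Since $s' \neq 0$ forces $\{s'\} \times I_{s'} \subset B^p_\eta \setminus \{0\}$ and this interval is contractible, the tube fibration of $F$ (Theorem~\ref{t:tube}) trivializes over $\{s'\} \times I_{s'}$, yielding
\[ F^{-1}(\{s'\} \times I_{s'}) \cap B^m_\e \;\cong\; \Fib(F) \times I_{s'} \;\cong\; \Fib(F) \times (0,1). \]

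To identify this model with the canonical Milnor fibre of $H$ furnished by Theorem~\ref{t:composed}, I would invoke the independence-of-Milnor-data statement of Theorem~\ref{t:tube} to choose admissible Milnor data $(\e', \eta_H)$ for $H$ with $\e' \leq \e$ small enough that $\sup_{B^m_{\e'}} |g_p|^2 + \eta_H^2 < \eta^2$; this is feasible because $g_p$ is continuous with $g_p(0) = 0$, so $\sup_{B^m_{\e'}} |g_p| \to 0$ as $\e' \to 0$. Then every $x \in H^{-1}(s') \cap B^m_{\e'}$ with $\|s'\| < \eta_H$ satisfies $\|F(x)\|^2 = \|s'\|^2 + g_p(x)^2 < \eta^2$, so $F(x) \in B^p_\eta \setminus \{0\}$ and $g_p(x) \in I_{s'}$. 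Hence the equality of sets $H^{-1}(s') \cap B^m_{\e'} = F^{-1}(\{s'\} \times I_{s'}) \cap B^m_{\e'}$ holds, and combined with the display above it gives the desired homeomorphism $\Fib(H) \cong \Fib(F) \times (0,1)$. The most delicate point is arranging the shrinking of data simultaneously for the tube fibrations of both $F$ and $H$, which is precisely what the stratified-homeomorphism independence asserted in Theorem~\ref{t:tube} is designed to allow.
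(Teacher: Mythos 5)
Your proposal is correct and takes essentially the same route as the paper: apply Theorem~\ref{t:composed} with $G$ the coordinate projection to get the tube fibration for $H$, and then identify $\Fib(H)$ as the total space of the restriction of the $F$-tube fibration over the contractible interval $\{s'\}\times I_{s'}\subset B^p_\eta\m\{0\}$, which is trivial with fibre $\Fib(F)$. Your write-up is in fact more careful than the paper's two-line argument on the one delicate point (shrinking $\e$ to $\e'$ so that the $H$-Milnor fibre actually lands inside the $F$-tube, and then trading $B^m_\e$ for $B^m_{\e'}$ in the final identification), and the tool you invoke for it --- the independence of the Milnor data guaranteed by $\rho$-regularity of $F$ in Theorem~\ref{t:tube} --- is exactly what closes that gap.
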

 
  In the particular setting ``$F$ has an isolated singular point'',  Corollary  \ref{c:Milnorquest} has been  Milnor's question at \cite[p.100]{Mi}. In the above more  general setting ``$F$ has an isolated critical value'', a proof of Corollary  \ref{c:Milnorquest} has been  given in \cite[Theorem 6.3]{AD}.

\begin{proof}[Proof of Corollary \ref{c:Milnorquest}]
The map germ $G$ from Theorem \ref{t:composed} is here the projection $(\bR^p, 0) \to (\bR^{p-1}, 0)$ to the first $p-1$ coordinates.  As a direct consequence of Theorem \ref{t:composed}, it follows that   $H= F\circ G$ has a tube fibration. 

The image by $F$ of the Milnor fibre of $H$ is a line segment $I$, which is contractible.  The restriction of $F$ on the Milnor fibre of $H$ is a locally trivial fibration of  base $I$ and fibre the Milnor fibre of $F$, thus it is a trivial fibration.
\end{proof}

\begin{example}
Let $F:(\mathbb{R}^3,0)\rightarrow(\mathbb{R}^2,0)$, $F(x,y,z)=(y^{4}-z^{2}x^{2}-x^{4}, xy)$. It was shown in \cite[Example 5.1]{ACT} that $F$ is tame with isolated singular value.  A direct computation shows that the Milnor fibre of $F$ is a disjoint union of two closed intervals, for instance by considering  $F^{-1}(\delta,0)$ for $\delta>0$ sufficiently small.

Let $G:(\mathbb{R}^2,0)\rightarrow(\mathbb{R}^2,0)$, $G(u,v)=(u^{2}-v^{2}, 2uv)$. We see that $G$ has an isolated singular point at the origin, thus it has a Milnor  tube fibration, and one easily computes that the Milnor fibre is two points. By Theorem
\ref{t:composed}, the map germ $H = G\circ F$ has a local tube fibration. Its Milnor fibre is the disjoint union of 4 line segments.
\end{example}


\end{document}